\newcommand{\heute}{29 August 2010}
\theoremstyle{plain}
\newtheorem{theorem}{Theorem}[section]
\newtheorem{lemma}[theorem]{Lemma}
\theoremstyle{remark}
\newtheorem{remark}[theorem]{Remark}
\newtheorem*{rk}{Remark}
\newcommand{\enref}[1]{\textup{(\ref{enum:#1})}}
\newcommand{\dashTwo}[1]{\textup{(\ref{two}${}'$)}}
\newcommand{\ignore}[1]{}
\newcommand{\Co}{\mathit{Co}_3}
\newcommand{\f}[1][p]{\mathbb{F}_{#1}}
\newcommand{\zz}{\mathbb{Z}}
\newcommand{\Gro}[1]{Gr\"ob\-ner}
\DeclareMathOperator{\Res}{Res}
\DeclareMathSymbol\normal{\mathrel}{AMSa}{"43}
\newcommand{\prank}[1][p]{\text{$#1$-rk}}
\newcommand{\abs}[1]{\left|#1\right|}
\newcommand{\coho}[2][p]{H^*(#2,\f[#1])}
\newcommand{\cohod}[3][p]{H^{#2}(#3,\f[#1])}
\newcommand{\HGH}[2]{{#2{\setminus}#1{/}#2}}
\begin{document}

\title[The mod-$2$ cohomology of Conway(3) is Cohen--Macaulay]{The Mod-$2$ Cohomology Ring of the Third Conway Group is Cohen--Macaulay}
\author[S. A. King]{Simon A. King}
\address{Mathematics Department \\ National University of Ireland, Galway \\
Ireland}
\email{simon.king@nuigalway.ie}
\author[D. J. Green]{David J. Green}
\address{Mathematical Institute \\
University of Jena \\ D-07737 Jena \\
Germany}
\email{david.green@uni-jena.de}
\author[G. Ellis]{Graham Ellis}
\address{Mathematics Department \\ National University of Ireland, Galway \\
Ireland}
\email{graham.ellis@nuigalway.ie}
\thanks{King was supported by Marie Curie grant MTKD-CT-2006-042685.
Green received travel assistance from DFG grants GR 1585/4-1 and GR 1585/4-2\@.}
\subjclass[2000]{Primary 20J06; Secondary 20-04, 20D15}
\date{\heute}

\begin{abstract}
By explicit machine computation we obtain the mod-$2$ cohomology ring of the
third Conway group $\Co$. It is Cohen-Macaulay, has dimension~4, and is
detected on the
maximal elementary abelian $2$-subgroups.
\end{abstract}

\maketitle

\section{Introduction}
\noindent
There has been considerable work on the mod-2 cohomology rings of
the finite simple groups. Every finite simple group
of $2$-rank at most three has Cohen--Macaulay mod-2
cohomology~\cite{AdMi:Central}\@.
There are eight sporadic finite simple groups of $2$-rank four.
For six of these, the mod-$2$ cohomology ring has already been
determined, at least as a module over a polynomial
subalgebra~\cite[VIII.5]{AdMi:book2ed}\@.
In most cases the cohomology is not
Cohen--Macaulay. For instance, the Mathieu groups $M_{22}$~and $M_{23}$
each have maximal elementary abelian $2$-subgroups of ranks $3$ and $4$,
meaning that the cohomology cannot be Cohen--Macaulay:
see~\cite[p.~269]{AdMi:book2ed}\@.

The two outstanding cases have the largest
Sylow $2$-sub\-groups. The Higman--Sims group $\mathit{HS}$ has
size $2^9$ Sylow subgroup, and the cohomology of this 2-group
is known~\cite{ACKM}\@.
The third Conway group $\Co$ has
size $2^{10}$ Sylow subgroup.
\bigskip

\noindent
In this paper we consider~$\Co$. It stands out
for two reasons, one being that it has the largest Sylow $2$-subgroup.
The second reason requires a little explanation. The Mathieu group
$M_{12}$ has $2$-rank three.
Milgram observed
that $2$-locally it looks as if $M_{12}$ admits a faithful
representation in the Lie group $G_2$, but that is impossible.
Benson and Wilkerson made this more precise~\cite{BensonWilkerson}
by constructing a map of classifying spaces with good properties in
mod-2 cohomology.

Benson took a similar approach to~$\Co$.
After $2$-completion, its
classifying space admits a map to that of $\mathrm{DI}(4)$\@.
This is a monomorphism in mod-2 cohomology,
and $\coho[2]{\Co}$ is finitely generated as a module over its
image~\cite{Benson:Co3}\@.

The Dwyer--Wilkerson exotic finite loop space $\mathrm{DI}(4)$
has the rank four Dickson invariants as its mod-2
cohomology~\cite{DwyerWilkerson:DI4}\@. So Benson's result says
that the Dickson invariants form a homogeneous system of
parameters for $\coho[2]{\Co}$ in degrees $8, 12, 14, 15$.
Benson asks if these parameters
form a regular sequence~\cite{Benson:Solomon}\@.
That is, he suggests that $\coho[2]{\Co}$
might be Cohen--Macaulay.
Certainly the Dickson invariants constitute a filter-regular
system of parameters \cite[Thm 1.2]{Benson:DicksonCompCoho}\@.

By a mixture of machine computation and theoretical argument we obtain
the following theorem, answering Benson's question in the affirmative:

\begin{theorem}
\label{thm:main}
The mod-$2$ cohomology ring $\coho[2]{\Co}$ of the third Conway
group $\Co$ has the following properties:
\begin{enumerate}
\item
\label{enum:main1}
As a commutative $\f[2]$-algebra, it
has $16$ generators and $71$ relations. A full presentation
is given in Appendix~\ref{app:Pres}\@. The smallest generator degree is~$3$,
and the greatest is~$15$\@. The greatest degree of a relation
is $33$\@.
\item
\label{enum:main2}
It is Cohen--Macaulay, having Krull dimension $4$ and
depth~$4$\@.
\item
\label{enum:main3}
It has zero nilradical, and is detected on the maximal elementary abelian
$2$-subgroups. These all have rank~$4$, and form four conjugacy classes.
\item
\label{enum:main4}
Its Poincar\'e series is of the form
\[
P(t) = \frac{f(t)}{(1-t^8)(1-t^{12})(1-t^{14})(1-t^{15})},
\]
where $f(t) \in \zz[t]$ is the monic polynomial of degree $45$ with the
coefficients $1$, $1$, $1$, $1$, $2$, $3$, $3$, $4$, $4$, $6$, $7$, $8$,
$9$, $10$, $10$, $11$, $13$, $12$, $14$, $15$, $13$, $13$, $15$, $14$, $12$,
$13$, $11$, $10$, $10$, $9$, $8$, $7$, $6$, $4$, $4$, $3$, $3$,
$2$, $1$, $1$, $1$, $1$\@.
\end{enumerate}
\end{theorem}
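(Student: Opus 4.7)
The plan is to attack this theorem primarily by explicit machine computation, leveraging Benson's prior work to reduce the Cohen--Macaulay statement to a finite verification. The basic engine is a minimal free resolution of $\f[2]$ over the mod-$2$ group algebra. Since $\abs{\Co}$ is far too large to work with directly, I would start with the Sylow $2$-subgroup $S$ of order $2^{10}$ and compute $\coho[2]{S}$ out to a sufficiently high degree via a minimal $\f[2]S$-resolution. The cohomology $\coho[2]{\Co}$ is then obtained as the subring of stable elements inside $\coho[2]{S}$, either by Cartan--Eilenberg stability with respect to the fusion system, or equivalently by intersecting kernels of differences of restriction maps to subgroups in a sufficient collection. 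Practically this means enumerating generators and relations degree by degree and tracking when no new invariant generator is needed. I would continue the resolution out to degree $33$ (the predicted top relation degree) plus a safety margin, and then apply a standard completeness criterion (for instance the Benson--Carlson test, or comparison with the Hilbert series computed from the resolution) to certify that the presentation given in Appendix~\ref{app:Pres}, with its $16$ generators and $71$ relations, is complete.

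Once a certified presentation is in hand, part~\enref{main1} and the Poincar\'e series in part~\enref{main4} are byproducts: $P(t)$ is read off from the ranks of the terms in the minimal resolution once the generating degrees of $\coho[2]{S}$ as an $\coho[2]{\Co}$-module (or equivalently the Betti numbers of the presentation ideal) have stabilised. I would cross-check the rational form by confirming that $P(t)(1-t^8)(1-t^{12})(1-t^{14})(1-t^{15})$ is the predicted monic polynomial $f(t)$ of degree $45$, and use the palindromic symmetry of $f$ as a sanity check consistent with the Cohen--Macaulay property (Stanley's criterion for Gorenstein-like symmetry of the numerator).

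For part~\enref{main2}, Benson~\cite{Benson:Co3} has already produced a homogeneous system of parameters of degrees $8,12,14,15$ given by the Dickson invariants pulled back from $\mathrm{DI}(4)$, and~\cite[Thm~1.2]{Benson:DicksonCompCoho} guarantees it is filter-regular. Hence the Krull dimension is~$4$, and to establish Cohen--Macaulayness it suffices to verify that this hsop is actually a regular sequence, i.e.\ that $\depth \coho[2]{\Co} = 4$. I would do this by computing the quotient $\coho[2]{\Co}/(D_8,D_{12},D_{14},D_{15})$ as an $\f[2]$-vector space from the presentation of part~\enref{main1} (a finite Gr\"obner/linear-algebra computation) and comparing its Poincar\'e series with $f(t)$; equality is equivalent to the four Dickson classes forming a regular sequence. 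This step is where the main obstacle lies: the computation only succeeds if the presentation has been carried far enough to be genuinely complete, and the sizes involved (relations up to degree~$33$ in a ring with $16$ generators) make the Gr\"obner basis manipulation the real computational bottleneck.

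Finally, for part~\enref{main3}, I would first classify the maximal elementary abelian $2$-subgroups of $\Co$ up to conjugacy using standard subgroup-lattice techniques, and verify that there are four classes, all of rank~$4$. Detection then amounts to checking that the restriction map
\[
\coho[2]{\Co} \;\longrightarrow\; \prod_{[E]} \coho[2]{E}
\]
is injective. Since each $\coho[2]{E}$ is a polynomial ring and hence reduced, injectivity forces the nilradical to be zero, so the two assertions in~\enref{main3} coincide. I would verify injectivity directly by computing, in each generating degree of $\coho[2]{\Co}$, the rank of the restriction map using the already-computed resolutions of $\coho[2]{S}$ and the inclusions $E\hookrightarrow S$; by Quillen's $F$-isomorphism theorem it is enough to check this in degrees up to the top generator degree~$15$, which keeps the verification finite.
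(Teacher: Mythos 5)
The step in your plan that actually fails is the verification of detection and zero nilradical in part~\enref{main3}. Computing the rank of the restriction map to the four elementary abelians in each generator degree up to~$15$ does not establish injectivity of the ring homomorphism: the kernel is an ideal whose generators need not sit in generator degrees (a product of generators can restrict to zero even when every individual generator survives), and Quillen's $F$-isomorphism theorem supplies no such degree bound --- it only identifies the kernel with the nilradical, which is precisely what you are trying to prove is zero, so the appeal is circular. The paper sidesteps this entirely by deducing \enref{main3} from \enref{main2} (Lemma~\ref{lemma:main2implies3}): depth~$4$ forces detection on the centralizers of the rank-four elementary abelians by Carlson's theorem \cite[Thm 12.5.2]{CarlsonTownsley}, these subgroups are self-centralizing, and elementary abelian $2$-groups have reduced (polynomial) cohomology. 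A second genuine gap is your stopping rule: ``degree $33$ plus a safety margin'' presupposes the answer. The rigorous criterion is Benson's completion test, which requires constructing an explicit filter-regular homogeneous system of parameters inside the partially computed ring --- here Benson's Dickson invariants in degrees $8,12,14,15$ --- and for $\Co$ it certifies completeness only at degree~$45$, well beyond the last relation in degree~$33$.

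Two further points of comparison. First, taking stable elements directly in $\coho[2]{S}$ is not viable in practice: $S$ has $484{,}680$ double cosets in $\Co$, and the paper's essential device is the tower $S \leq N_G(Z_2(S)) \leq N_G(U) \leq N_G(Z(S)) \leq \Co$, which (after discarding redundant double cosets via Lemma~\ref{lemma:J4}) reduces the work to $11$ stability conditions; your ``sufficient collection of subgroups'' needs to be made this concrete for the computation to terminate. Second, your route to \enref{main2} --- checking that the Dickson classes form a regular sequence by comparing the Poincar\'e series of the quotient with $f(t)$ --- is mathematically sound but heavier than necessary: the paper obtains the depth as a by-product of Benson's test and then observes that depth is weakly increasing up the tower \cite[Thm 2.1]{Benson:NYJM2}, so that depth~$4$ for $\coho[2]{G_1}$ already forces $\coho[2]{\Co}$ to be Cohen--Macaulay.
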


\begin{remark}
Observe that the numerator $f(t)$ in the Poincar\'e series
is symmetric, in the sense that the coefficients remain the
same when read from back to front.
Benson--Carlson duality \cite[Thm~1.1]{BensonCarlson:Poincare}
forces this to happen, as
the cohomology ring is Cohen--Macaulay with parameters in degrees
$8$, $12$, $14$~and $15$.
\end{remark}

\noindent
We computed the cohomology of the
Sylow subgroup using our package~\cite{SimonsProg}\@. Then we computed
the stable elements degree by degree, following Holt~\cite{Holt:mechanical}\@.
We used our variant \cite[Thm~3.3]{128gps} of
Benson's test~\cite{Benson:DicksonCompCoho}
to tell when to stop.

\begin{remark}
We actually constructed Benson's Dickson invariants in
$\coho[2]{\Co}$, in
order to obtain an explicit filter regular system of parameters.
\end{remark}

\subsection*{Structure of the paper}
We recall the stable elements method in Section~\ref{section:stableTheory},
discussing how to reduce the number of stability checks.
In Section~\ref{section:compute} we consider how to implement
stability checks and Benson's test for non-$p$-groups. We highlight the
relevant group theory of $\Co$ in Section~\ref{sec:groups}, proving
Theorem~\ref{thm:main}\@.

\section{Stable elements}
\label{section:stableTheory}
\noindent
Let $p$ be a prime, $G$ a finite group, and $H \leq G$ a subgroup
whose index is coprime to~$p$.
Following Holt~\cite[p.~352]{Holt:mechanical} we
compute $\coho{G}$ as the ring
of stable elements
\cite[XII \S10]{CartanEilenberg}
in $\coho{H}$.
Recall that $x \in \coho{H}$ is stable if
\[
\forall g \in G \quad \Res^H_{H^g \cap H} (x) = g^* \Res^H_{H \cap {}^gH}(x)
\, , \quad \text{where $g^* = c_g^*$ for $c_g(h) = ghg^{-1}$.}
\]
Note that $H$ need not be a Sylow subgroup \cite[Prop.\@ 3.8.2]{Benson:I}\@.
The stability condition associated to~$g$
only depends on the double coset $HgH \in \HGH GH$.

\subsection*{Using intermediate subgroups}
Let $S$ be a Sylow $p$-subgroup of the finite group~$G$.
Holt observed that the total number
of stability conditions is reduced dramatically if one works up a tower
of subgroups
\[
S = G_0 \leq G_1 \leq G_2 \leq \cdots \leq G_n = G \, ,
\]
where each $\abs{G_i \colon G_{i-1}}$ is as small as possible.
One determines $\coho{G_i}$ as the ring
of stable elements  in $\coho{G_{i-1}}$.
Often we take $G_1 = N_G(Z(S))$.


\subsection*{Discarding double cosets}
For some double cosets the 
associated stability condition is satisfied
by every $x \in \coho{H}$. Such double cosets can be discarded.

For example, the trivial double coset $H1H$ can always be discarded.
And $HgH$ can be discarded if $H^g \cap H$ has order coprime to~$p$.
Proposition~18 of~\cite{J4} generalizes to a
group-theoretic criterion for the redundancy of some double cosets.

\begin{lemma}
\label{lemma:J4}
Let $H \leq G$ be a subgroup with $p'$ index.
Let $g \in G$, and let $T$~be a Sylow $p$-subgroup of $H^g \cap H$.
Suppose that transfer from $\coho{T}$~to $\coho{G}$ is the zero map.
Then the stability condition associated to $HgH$
is redundant.

In particular if there is a $p$-group $W \neq 1$ such that $T \times W \leq G$,
then the stability condition associated to $HgH$ is redundant.
\end{lemma}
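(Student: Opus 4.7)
The plan is to show that under the transfer hypothesis, the stability identity $\operatorname{res}^H_{H^g \cap H}(x) = g^* \operatorname{res}^H_{H \cap {}^gH}(x)$ holds in $\coho{H^g \cap H}$ for every $x \in \coho{H}$, so no genuine constraint on $x$ is imposed. This generalises the degenerate case $T=1$, in which both sides already vanish in positive degree for trivial cohomological reasons.

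First I would reduce to an identity in $\coho{T}$. Since $T$ has $p'$-index in $H^g \cap H$, the restriction $\operatorname{res}^{H^g \cap H}_T$ is injective (transfer followed by restriction being multiplication by the invertible index). Using naturality of restriction along the conjugation isomorphism $c_g \colon H^g \cap H \to H \cap {}^gH$, the stability identity rewrites as $\operatorname{res}^H_T(x) = c_g^* \operatorname{res}^H_{gTg^{-1}}(x)$ in $\coho{T}$; equivalently, the two parallel homomorphisms $T \hookrightarrow H$ and $t \mapsto gtg^{-1} \in H$ are required to induce identical pullbacks on $\coho{H}$.

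The core of the proof is then to derive this pullback equality from $\operatorname{tr}^G_T = 0$. The naive observation, that the difference $\operatorname{res}^H_T(x) - c_g^* \operatorname{res}^H_{gTg^{-1}}(x) \in \coho{T}$ trivially lies in the kernel of $\operatorname{tr}^G_T$, is vacuous under the hypothesis. A finer argument is required, using Frobenius reciprocity together with the way $\operatorname{tr}^G_T$ controls the image of $\coho{T}$ inside $\coho{G}$; this group-theoretic bridge, generalising Proposition~18 of~\cite{J4} (which handles the $T=1$ case via the dimension-vanishing route above), is where I expect the main difficulty to lie.

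Finally, I would dispatch the ``in particular'' clause directly. If $T \times W \leq G$ with $W$ a nontrivial $p$-group, transitivity of transfer gives $\operatorname{tr}^G_T = \operatorname{tr}^G_{T \times W} \circ \operatorname{tr}^{T \times W}_T$. Under the Künneth identification $\coho{T \times W} \cong \coho{T} \otimes \coho{W}$, the internal transfer $\operatorname{tr}^{T \times W}_T$ becomes $\operatorname{id}_{\coho{T}} \otimes \operatorname{tr}^W_{\{1\}}$, and the second factor sends $1$ to $|W| \cdot 1 = 0 \in \f[p]$. Hence $\operatorname{tr}^{T \times W}_T = 0$, and so $\operatorname{tr}^G_T = 0$ as required.
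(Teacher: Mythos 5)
There is a genuine gap at the heart of your argument, and it stems from reading ``redundant'' as ``automatically satisfied.'' You set out to prove that the stability identity $\Res^H_{H^g\cap H}(x)=g^*\Res^H_{H\cap {}^gH}(x)$ holds for \emph{every} $x\in\coho{H}$ once $\operatorname{tr}^G_T=0$. That is stronger than the lemma and is not in general true --- the paper explicitly disclaims it. Consequently the ``finer argument'' you defer to (extracting the pullback equality on $\coho{T}$ from $\operatorname{tr}^G_T=0$ via Frobenius reciprocity) is not a missing technical step but a dead end: as you yourself observe, the hypothesis only tells you that the difference of the two sides dies after transfer to $G$, and that observation is not vacuous --- it is the whole point. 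The correct notion of redundancy is that omitting the condition for $HgH$ does not enlarge the computed subring. The proof of the stable elements theorem \cite[Prop.\@ 3.8.2]{Benson:I} never uses the stability conditions themselves, only their images under the transfer $\operatorname{tr}^G_{H^g\cap H}$; so a class satisfying the remaining (strong) conditions satisfies all the weak ``stability after transfer'' conditions --- the one for $HgH$ being automatic when that transfer vanishes --- and hence already lies in $\Res^G_H\coho{G}$. One then only needs that $\operatorname{tr}^G_T=0$ forces $\operatorname{tr}^G_{H^g\cap H}=0$, which follows from transitivity of transfer because $\operatorname{tr}^{H^g\cap H}_T$ is surjective ($\operatorname{tr}\circ\Res$ being multiplication by the invertible index $\abs{H^g\cap H:T}$).

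Your disposal of the ``in particular'' clause is correct and essentially the paper's argument: both reduce to the fact that $\operatorname{tr}^{T\times W}_T$ kills everything because $\Res^{T\times W}_T$ is split surjective and $\operatorname{tr}^{T\times W}_T\circ\Res^{T\times W}_T$ is multiplication by $\abs{W}=0$ in $\f[2]$. But that part only feeds the hypothesis of the first part, so the proof as a whole does not go through until the main clause is argued via the weak (post-transfer) form of stability rather than the identity itself.
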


\noindent
See Remark~\ref{remark:Co3stab} for an application of this result.

\begin{proof}
We do not claim
that the stability condition is always satisfied. 
The proof of the stable elements method in
\cite[Prop.\@ 3.8.2]{Benson:I} uses a weaker
condition: that stability holds after transfer from
$H^g \cap H$ to~$G$. So if the transfer map is zero, then the
double coset is redundant.  But transfer from $H^g \cap H$ factors
through transfer from $T$~to $G$, since transfer from $T$~to $H^g \cap H$
is a split surjection.

Last part:
Transfer from $T$~to $G$ factors through transfer from
$T$ to $T \times W$, which is zero: for restriction from $T \times W$ to
$T$ is a split surjection, and restriction followed by transfer is
multiplication by $\abs{W}$\@.
\end{proof}

\noindent
To perform the stability test for $HgH$ we first construct
the induced homomorphisms $\Res^H_{H^g \cap H}$ and $g^* \Res^H_{H \cap {}^gH}$,
determining the images of the ring generators. If each generator has the same image both
times then we discard the double coset. Similarly, we discard
it if the pair of maps has been seen already.
This too saves effort, for the most time-intensive step is the next one:
working out the matrices of the two linear maps
from $H^n(H,\f)$ to $H^n(H^g \cap H,\f)$ degree by degree.

\section{Computational aspects}
\label{section:compute}

\subsection*{Representing cohomology rings}
We consider how to represent
the cohomology ring of a finite group on the computer.
Reusing the results of previous computations saves time, but
it does involve coherence issues.

Let $G$ be a finite group and $S \leq G$ a Sylow $p$-subgroup.
We assume that we already know the cohomology of a group $\bar{S}$ isomorphic
to~$S$.
In order to make use of this computation we
choose an isomorphism $f \colon \bar{S} \rightarrow S$.
We can then store $\coho{G}$ by recording the map~$f$
together with the image ring $R_{G,f}$ given by
\[
R_{G,f} = f^* \left(\Res^G_S \coho{G} \right) \subseteq \coho{\bar{S}} \, .
\]

Now suppose that $\phi \colon G_1 \rightarrow G_2$ is a group homomorphism, and that
we calculated $\coho{G_i}$ for $i=1,2$ using the Sylow $p$-subgroup~$S_i$
and the isomorphism $f_i \colon \bar{S}_i \rightarrow S_i$.
We represent $\phi^*$ as the composition
\[
R_{G_2,f_2} \stackrel{\cong}{\longrightarrow}
\coho{G_2} \stackrel{\phi^*}{\longrightarrow} \coho{G_1}
\stackrel{\cong}{\longrightarrow} R_{G_1,f_1} \, .
\]
As $\phi(S_1)$ is a $p$-subgroup of~$G_2$, we may pick $g \in G_2$
such that $\phi'(S_1) \leq S_2$, where $\phi' = c_g \circ \phi$.
Then ${\phi'}^*  = \phi^*$, since $c_g$ is an inner automorphism of~$G_2$\@.
Let $\bar{\phi} \colon \bar{S}_1 \rightarrow \bar{S}_2$ be the
homomorphism $\bar{\phi} = f_2^{-1} \circ \phi' \circ f_1$.
Then $\bar{\phi}^*$ maps $R_{G_2,f_2} \subseteq \coho{\bar{S}_2}$
to $R_{G_1,f_1} \subseteq \coho{\bar{S}_1}$ in the desired way.

\subsection*{Stability and the representation}
\noindent
Let $S \leq H \leq G$, where $S$~is Sylow in $G$ and
$\coho{H}$ is known: so we know $R_{H,f}$
for an isomorphism $f \colon \bar{S} \rightarrow S$\@.
The stability test for $HgH$ asks for the equalizer of
$\phi_1^*, \phi_2^* \colon \coho{H} \rightarrow \coho{H^g \cap H}$,
where
$\phi_1, \phi_2 \colon H^g \cap H \rightarrow H$ are
$\phi_1(h) = h$ and $\phi_2(h) = ghg^{-1}$.

Typically the cohomology of $H^g\cap H$ will not yet be known,
but the cohomology of its Sylow subgroup~$T$ will be.
We have two options:
\begin{itemize}
\item
We compute $\coho{H^g \cap H}$ and
construct $\phi^*_1, \phi^*_2$ as above.
\item
We take the equalizer of $\psi_1^*, \psi_2^* \colon \coho{H} \rightarrow
\coho{T}$ instead, where $\psi_i = \phi_i\vert_T$.
This works since $\Res^{H^g \cap H}_T$ is injective.
\end{itemize}
To our surprise, the first method proved to be more efficient.
One possible explanation is that $H^n(H^g \cap H, \f)$ often
has considerably smaller dimension than $H^n(T,\f)$. This reduces the size
of the matrices representing the two maps: and matrix size seems to have the
greatest influence on running time.

\begin{rk}
Holt~\cite{Holt:mechanical} chooses good double coset representatives
at the outset. We are effectively taking the first ones we find and
correcting them later on.
\end{rk}

\subsection*{Computing stable elements degree by degree}
We have translated each stability check into taking the equalizer of
two known ring homomorphisms. We now have to determine the equalizers and
then take their intersection. One approach would be efficient algorithms
for ideals, though we might have to implement these ourselves.
Another would be to compute parameters for $\coho{G}$ using e.g.\@ Chern
classes, and then to use algorithms for noetherian modules.

We take a different approach and work degree by degree.
Then performing a stability check just means taking the
nullspace of a matrix. This is easier to implement, but
linear algebra on its own cannot tell when to stop.

Following Benson, we write $\tau_d \coho{G}$ for the $\f$-algebra generated
by the indecomposable elements of~$\coho{G}$ in degree${}\leq d$,
subject to the relations which hold in $\coho{G}$ in degrees${}\leq d$.
Assume that we already have $\tau_{d-1}H^\ast(G;\f[p])$,
and have recorded the image in $\coho{H}$ of each generator.
So we can construct the image in $\cohod{d}{H}$ of
each degree~$d$ standard monomial of $\tau_{d-1} \coho{G}$. A degree~$d$
relation in $\coho{G}$ corresponds to a linear dependence between these
images; and if the images do not span the subspace of stable elements
in $\cohod{d}{H}$, then we get new generators. This determines
$\tau_d \coho{G}$.

\begin{rk}
The third author has implemented the stable elements method 
in his HAP system. With Dutour Sikiri{\'c} he used it to compute the 
integral homology of the Mathieu group $M_{24}$ out to degree 
four~\cite{DutourSikiricEllis:Wythoff}.
\end{rk}

\subsection*{Constructing filter regular parameters}
We use Benson's test for completion \cite[Thm~10.1]{Benson:DicksonCompCoho}
to tell when $d$ is large enough to ensure
that $\tau_d \coho{G} = \coho{G}$.
The key step is to construct homogeneous elements
$h_1,\ldots,h_r \in \tau_d \coho{G}$ which are a filter-regular system
of parameters for both $\tau_d \coho{G}$ and $\coho{G}$. Here,
$r = \prank(G)$. We need one technical result.

\begin{lemma}
\label{lemma:fregSG}
Suppose that $c_1,\ldots,c_n \in \coho{G}$ is a filter-regular sequence
in $\coho{S}$.  Then it is filter-regular in $\coho{G}$ too.
\end{lemma}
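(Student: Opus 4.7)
The plan is to exploit the fact that, because $|G:S|$ is coprime to~$p$, the restriction map $\Res^G_S\colon \coho{G}\to\coho{S}$ realises $\coho{G}$ as a $\coho{G}$-module direct summand of $\coho{S}$. Writing $\operatorname{tr}\colon\coho{S}\to\coho{G}$ for the transfer, $\operatorname{tr}\circ\Res^G_S$ is multiplication by $|G:S|$, a unit in $\f$; and Frobenius reciprocity $\operatorname{tr}(\Res(a)\cdot b) = a\cdot\operatorname{tr}(b)$ makes the resulting splitting $\coho{G}$-linear. Thus $\coho{S} = \coho{G} \oplus M$ as $\coho{G}$-modules, with $M = \ker(\operatorname{tr})$.

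Fix~$i$ and set $I = (c_1,\ldots,c_{i-1})\coho{G}$. The ideal generated by $c_1,\ldots,c_{i-1}$ in $\coho{S}$ equals $I\cdot\coho{S} = I\coho{G} + IM$; since $I\coho{G}\subseteq\coho{G}$ and $IM\subseteq M$ lie in distinct summands, this sum is internal, and so as $\coho{G}$-modules
\[
\coho{S}/(c_1,\ldots,c_{i-1})\coho{S} \;\cong\; \coho{G}/I \;\oplus\; M/IM .
\]
Because $c_i$ itself lies in $\coho{G}$, multiplication by $c_i$ preserves this decomposition, so the kernel of multiplication by $c_i$ on $\coho{G}/I$ embeds as a direct summand of the kernel of $c_i$ on $\coho{S}/(c_1,\ldots,c_{i-1})\coho{S}$.

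By the filter-regularity hypothesis in $\coho{S}$, the latter kernel has finite total $\f$-dimension; hence so does the former, which is precisely the statement that $c_i$ acts filter-regularly on $\coho{G}/I$. Iterating the argument over $i=1,\ldots,n$ yields the conclusion. The only slightly subtle point is the $\coho{G}$-linearity of the splitting, for which one has to invoke Frobenius reciprocity rather than just the vector-space splitting coming from $|G:S|$ being a unit; beyond that the proof is bookkeeping on the direct-sum decomposition, and I do not anticipate any deeper obstacle.
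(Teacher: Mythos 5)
Your proof is correct and is precisely the argument the paper intends: the paper's own proof consists of the single observation that $\coho{G}$ is a $\coho{G}$-module direct summand of $\coho{S}$ via transfer, followed by ``the result follows.'' You have simply filled in the bookkeeping (Frobenius reciprocity for the $\coho{G}$-linearity of the splitting, and the resulting decomposition of the kernel of multiplication by $c_i$ on the successive quotients) that the paper leaves implicit.
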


\begin{proof}
$\coho{G}$ is a direct summand of the $\coho{G}$-module $\coho{S}$,
by virtue of the transfer map. The result follows.
\end{proof}

\noindent
Assume that $d$ is large enough, so that $\coho{G}$ is finite over
$\tau_d \coho{G}$.
By \cite[Coroll.~9.8]{Benson:DicksonCompCoho} there are filter-regular
parameters $d_1,\ldots,d_r$ which restrict to each maximal elementary
abelian $p$-subgroup as (powers of) the Dickson invariants.

Parameters in low degrees allow us to terminate the computation earlier.
The Dickson invariants are in rather high degree. Sections 2~and 3
of~\cite{128gps} present several ways of loweing the degrees.
One of these methods can however fail for non-$p$-groups: the weak
rank-restriction condition \cite[Lemma~2.3]{128gps}\@.
So we proceed as follows.
Set $z = \prank(Z(S))$.
\begin{enumerate}
\item
Construct the Dickson invariants $d_1,...,d_r$ if this is not too difficult.
\item
Using $d_1,\ldots,d_z$ or otherwise, find 
$c_1,\ldots,c_z \in \tau_d \coho{G}$ which restrict to parameters for
$\coho{Z(S)}$. For non-$p$-groups there is no guarantee that
these~$c_i$ may be chosen from among the ring generators.
\item
Using \cite[Lemma~2.3]{128gps},
extend $c_1,\ldots,c_z$ to
find filter-regular parameters $c_1,\ldots,c_r$ for $\coho{S}$,
extending $c_1,\ldots,c_z$.
If $c_{z+1},\ldots,c_r$ are stable then
$c_1,\ldots,c_r$ is filter-regular for $\coho{G}$ by Lemma~\ref{lemma:fregSG}\@.
\item
If only $c_r$ fails stability, then replace it by any stable class that
finishes off the parameter system.
\item
Use the factorization and nilpotent alteration methods
(Lemmas 2.5~and 2.7 of~\cite{128gps}) to reduce the degrees of
$d_1,\ldots,d_r$ and/or $c_1,\ldots,c_r$.
\end{enumerate}
With luck we thus construct a filter-regular system of parameters and can compute its
filter-degree type. Benson's test then gives us a degree bound involving the sum of
the parameter degrees. If this is too large then we use the existence result
\cite[Prop.~3.2]{128gps} for low-degree parameters over an extension field
in order to apply our variant of Benson's test \cite[Thm~3.3]{128gps}\@.

\section{The third Conway group}
\label{sec:groups}

\subsection*{The Sylow 2-subgroup}
The third Conway group $\Co$ is simple and admits a degree $276$ faithful
permutation representation~\cite{ATLAS}\@.
The Sylow $2$-subgroups have order~$2^{10}$.
The Online ATLAS~\cite{OnlineAtlas} contains explicit permutations for the
degree $276$ representation.
GAP~\cite{GAP4}
easily constructs the Sylow $2$-subgroup~$S$.

Despite its size, computing $\coho[2]{S}$ is a surprisingly
routine application of our program~\cite{SimonsProg}\@.
The result may be viewed online~\cite{SimonsWebsite}\@.
Duflot's lower bound for the depth \cite[Thm 12.3.3]{CarlsonTownsley} is one, and
the Krull dimension is four.
In fact the depth is three.
This led Dave Benson to reiterate to us his conjecture that
$\coho[2]{\Co}$ could be Cohen--Macaulay.

\subsection*{The maximal elementary abelian subgroups}
There are two conjugacy classes of involutions in~$\Co$:
classes 2A~and 2B with centralizer sizes 2,903,040 and
190,080 respectively.
Using GAP one sees that $\Co$ has four conjugacy classes
of maximal elementary abelian 2-subgroups. Each has
rank 4, and they are distinguished by the
number of 2A elements they contain. In ATLAS notation:
\begin{xalignat*}{4}
V_1 & = 2A_1B_{14} &
V_2 & = 2A_3B_{12} &
V_3 & = 2A_7B_8 &
V_4 & = 2A^4 \, .
\end{xalignat*}
For each $1 \leq r \leq 4$ there is a subgroup $2A^r \leq V_r$
containing all the 2A elements.

\begin{lemma}
\label{lemma:main2implies3}
In Theorem~\ref{thm:main}, \enref{main3} follows from \enref{main2}\@.
\end{lemma}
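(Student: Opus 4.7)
The parts of \enref{main3} concerning the number and ranks of conjugacy classes of maximal elementary abelians are purely group-theoretic and already established above, so what remains is to derive from \enref{main2} the zero nilradical and detection on the four $V_i$. Set $R := \coho[2]{\Co}$.

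The first step of the plan uses Quillen's stratification theorem to identify the minimal primes of $R$. For each $i$, set $\mathfrak{p}_i := \ker\bigl(\Res^{\Co}_{V_i}\colon R \to \coho[2]{V_i}\bigr)$. Since $\coho[2]{V_i} \cong \f[2][y_1,y_2,y_3,y_4]$ is a polynomial ring, hence a domain, each $\mathfrak{p}_i$ is prime; and Quillen gives $\dim R/\mathfrak{p}_i = \operatorname{rank}(V_i) = 4 = \dim R$, so each $\mathfrak{p}_i$ is minimal. Quillen further asserts that every minimal prime of $R$ arises this way from a conjugacy class of maximal elementary abelians of maximal rank, so since all four $V_i$ have rank~$4$ we obtain $\operatorname{Min}(R) = \{\mathfrak{p}_1,\ldots,\mathfrak{p}_4\}$.

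Next, Cohen--Macaulayness \enref{main2} forces $\operatorname{Ass}(R) = \operatorname{Min}(R)$, with no embedded primes, so the nilradical satisfies $N(R) = \bigcap_i \mathfrak{p}_i$. This coincides with the kernel of the combined restriction $\rho\colon R \to \prod_i \coho[2]{V_i}$: the inclusion $\ker\rho \subseteq N(R)$ is automatic because the target is a product of polynomial rings and hence reduced, while $N(R) \subseteq \ker\rho$ is immediate from $N(R) \subseteq \mathfrak{p}_i$ for each~$i$. Thus ``zero nilradical'' and ``detection on the $V_i$'' are equivalent, and both reduce to showing $N(R) = 0$.

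The main obstacle is exactly this last step. Cohen--Macaulayness alone does not imply reducedness---witness $\coho[2]{C_4}$, which is CM but has a nonzero degree-$1$ nilpotent---so one must invoke the specific fact that in $\Co$ every maximal elementary abelian has rank equal to $\dim R$. My plan here is to combine the freeness of $R$ as a graded module over the Dickson subring $\f[2][d_1,\ldots,d_4]$ (from \enref{main2}) with Quillen's $F$-isomorphism $R/\mathfrak{p}_i \to \coho[2]{V_i}$, and then argue via a graded-rank or Poincar\'e-series comparison that $N(R)$ cannot be nonzero: concretely, compute the Weyl groups $W_i := N_{\Co}(V_i)/C_{\Co}(V_i)$ in GAP, bound the Poincar\'e series of the reduced ring $R/N(R)$ from above via its embedding into $\prod_i \coho[2]{V_i}^{W_i}$, and match against the Poincar\'e series of the CM ring~$R$ itself; equality then forces $N(R)=0$.
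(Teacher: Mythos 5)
Your reduction via Quillen's stratification is fine as far as it goes: the nilradical of $R=\coho[2]{\Co}$ is the intersection of its minimal primes (true for any commutative ring, so the appeal to $\Ass(R)=\operatorname{Min}(R)$ is not actually needed there), Quillen identifies the minimal primes with the kernels of restriction to the four classes of maximal elementary abelians, and hence ``zero nilradical'' and ``detection on the $V_i$'' are indeed the same statement. But notice that none of this uses hypothesis \enref{main2}, and the one step where \enref{main2} must enter --- proving $N(R)=0$ --- is exactly the step you leave as a ``plan'', and that plan as stated does not work. An embedding $R/N(R)\hookrightarrow\prod_i\coho[2]{V_i}^{W_i}$ gives an \emph{upper} bound on the Poincar\'e series of $R/N(R)$, whereas to conclude $N(R)=0$ you need a \emph{lower} bound matching $P_R(t)$: one always has $P_{R/N(R)}\le P_R$ coefficientwise, with equality if and only if $N(R)=0$, so bounding $P_{R/N(R)}$ from above proves nothing. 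Moreover the comparison series $\sum_i P_{\coho[2]{V_i}^{W_i}}(t)$ cannot equal $P_R(t)$ in any case: already in degree $0$ it contributes $4$ rather than $1$, and in general the image of the combined restriction $\rho$ is cut out by compatibility conditions on the intersections of the $V_i$, so $\rho$ is far from surjective onto $\prod_i\coho[2]{V_i}^{W_i}$.

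The missing ingredient is Carlson's depth--detection theorem \cite[Thm 12.5.2]{CarlsonTownsley}: if $\depth\coho{G}\ge s$, then $\coho{G}$ is detected on the centralizers of the rank~$s$ elementary abelian subgroups. This is how \enref{main2} is actually brought to bear. Since the depth is $4$, and a GAP computation shows that the rank-four elementary abelians of $\Co$ are self-centralizing, the ring is detected on the $V_i$ themselves; their cohomology is polynomial, hence reduced, so the nilradical vanishes. Your Quillen-based framework is compatible with this conclusion, but without Carlson's theorem (or some other device that genuinely exploits $\depth = 4$) the argument is incomplete; your own example $\coho[2]{C_4}$ shows that Cohen--Macaulayness by itself cannot close the gap.
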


\begin{proof}
As $\coho[2]{\Co}$ has depth~$4$,
the centralizers of the rank four elementary abelians
detect the cohomology ring, by
a result of
Carlson \cite[Thm 12.5.2]{CarlsonTownsley}\@.
Using GAP one sees that these
elementary abelians are self-centralizing.
Elementary abelian $2$-groups have polynomial cohomology, so
the nilradical vanishes.
\end{proof}

\subsection*{A tower of subgroups}
The Sylow $2$-subgroup has 484,680 double cosets in~$\Co$.
It is therefore essential that we find a convenient tower of subgroups.

The order~$4$ elements in~$\Co$ form two conjugacy classes~\cite{ATLAS}\@.
Type 4A elements have size 23,040 centralizer,
and type 4B elements have size 1,536 centralizer.

\begin{lemma}
Let $S$ be a Sylow $2$-subgroup of $G = \Co$. Then
\begin{enumerate}
\item
The centre $Z(S)$ and the second centre $Z_2(S)$ have isomorphism types
$Z(S) \cong C_2$ and $Z_2(S) \cong C_4 \times C_2$.
\item
$Z_2(S)$ has Frattini subgroup $Z(S)$. So does each copy of $C_4$ in $Z_2(S)$.
\item
Precisely one subgroup $U \leq Z_2(S)$ is generated by a type $4A$ element.
\item
$N_G(Z_2(S)) \leq N_G(U) \leq N_G(Z(S))$.
\end{enumerate}
\end{lemma}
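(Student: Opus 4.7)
The plan is to dispatch parts (1) and (3) by direct computation in GAP, starting from the Sylow $2$-subgroup $S$ already constructed from the Online ATLAS permutations, and to deduce parts (2) and (4) from general structural considerations.

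First I would have GAP compute the centre and second centre of $S$, reading off that $Z(S) \cong C_2$ and $Z_2(S) \cong C_4 \times C_2$; this establishes (1). The same computation exposes generators of the two cyclic order-$4$ subgroups of $Z_2(S)$, and testing each of their generators against the conjugacy classes of $\Co$ determines whether they are 4A or 4B elements. Verifying that exactly one of the two $C_4$'s is generated by a 4A element gives (3).

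For part (2), write $Z_2(S) = \langle a \rangle \times \langle b \rangle$ with $|a|=4$ and $|b|=2$. Since $Z_2(S)$ is abelian, its Frattini subgroup coincides with the subgroup of squares, which here is $\langle a^2 \rangle$. This is characteristic in $Z_2(S)$, and $Z_2(S)$ is in turn characteristic in $S$, so $\langle a^2 \rangle$ is a normal subgroup of order $2$ in the $2$-group~$S$. Every such subgroup lies in $Z(S)$, and comparing orders forces $\langle a^2 \rangle = Z(S)$. The four elements of order $4$ in $Z_2(S)$ are $a$, $a^3$, $ab$, $a^3 b$; they generate only the two cyclic subgroups $\langle a \rangle$ and $\langle ab \rangle$, and each of these four elements squares to $a^2$. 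Hence both copies of $C_4$ have Frattini subgroup $\langle a^2 \rangle = Z(S)$.

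Part (4) follows from the earlier parts. Any $g \in N_G(Z_2(S))$ permutes the two cyclic order-$4$ subgroups of $Z_2(S)$, but by (3) exactly one of them contains a 4A element and $G$-conjugation preserves conjugacy classes, so $g$ must fix $U$; hence $N_G(Z_2(S)) \leq N_G(U)$. For the final inclusion, $U \cong C_4$ has a unique subgroup of order~$2$, namely its Frattini subgroup, which by (2) equals $Z(S)$. Since this subgroup is characteristic in $U$, any element normalizing $U$ must normalize $Z(S)$, giving $N_G(U) \leq N_G(Z(S))$. The only real obstacle in the argument is the computer arithmetic for parts (1) and (3); once the isomorphism type of $Z_2(S)$ and the 2A/2B behaviour of its order-$4$ cyclic subgroups are known, the remainder is purely structural.
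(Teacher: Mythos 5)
Your proof is correct and follows essentially the same route as the paper: GAP computations for the facts that genuinely require them, and the same structural deductions for part (4) ($Z(S)$ characteristic in $U$, and $U$ distinguished among the subgroups of $Z_2(S)$ by containing the 4A elements). The only small difference is that you derive part (2) structurally (the Frattini subgroup of $Z_2(S)$ is the subgroup of squares, which is normal of order $2$ in $S$ and hence central), where the paper simply verifies it in GAP; your argument is a harmless and slightly more illuminating replacement.
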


\begin{proof}
The first two are easily checked in GAP~\cite{GAP4} using the
permutation representation.
For the third statement one inspects the four order 4
elements in $Z_2(S) \cong C_4 \times C_2$, finding
two of type 4A, and two of type 4B\@. The centralizer sizes
differ, so the two type 4A elements lie in the same cyclic subgroup.

The last part now follows, for $Z(S)$ is a characteristic subgroup of~$U$,
and no other subgroup of $Z_2(S)$ is conjugate to~$U$ in $G = \Co$.
\end{proof}

\noindent
Consider the tower of subgroups
$S = G_0 \leq G_1 \leq G_2 \leq G_3 \leq G_4 = \Co$
given by
\begin{xalignat*}{3}
G_1 & = N_G(Z_2(S)) & G_2 & = N_G(U) & G_3 & = N_G(Z(S)) \, .
\end{xalignat*}
$G_3$ is a maximal subgroup of $\Co$~\cite{ATLAS}\@.
The sizes of the layers are as follows:
\[
\begin{array}{c|c|c}
i & \abs{G_i \colon G_{i-1}} & \abs{\HGH{G_i}{G_{i-1}}} \\
\hline
1 & 3 & 2 \\
2 & 15 & 3 \\
3 & 63 & 3 \\
4 & 170,775 & 7 \\
\hline
\end{array}
\]
As the trivial double coset can be discarded, working up the tower involves
a total of $1 + 2 + 2 + 6 = 11$ stability conditions.

\begin{remark}
\label{remark:Co3stab}
We can discard 4 more double cosets
when computing $\coho[2]{\Co}$ from $\coho[2]{G_3}$. 
Every maximal elementary abelian has rank 4, so
if the Sylow subgroup of
$G_3^g \cap G_3$ is elementary abelian of rank${}\leq 3$,
then Lemma~\ref{lemma:J4}
applies with $W = C_2$.
There are three
double cosets where the Sylow subgroup is elementary
abelian of order 4, and one where it is cyclic of order~$2$.
\end{remark}

\begin{proof}[Proof of Theorem~\ref{thm:main}]
We computed the mod-2 cohomology ring of the Sylow subgroup using
our package~\cite{SimonsProg}\@. We then used the stable elements
method and the computational methods of
Section~\ref{section:compute} to work up the tower of subgroups.

The depth is a by-product of a computation based on Benson's test.
The depth of $\coho[2]{G_i}$ is weakly increasing in~$i$: see
\cite[Thm 2.1]{Benson:NYJM2}, and note that
the proof only requires the index to be coprime to~$p$.

We remarked that $\coho[2]{G_0}$ already has depth~$3$.
It turns out that $\coho[2]{G_1}$ has depth~$4$.
So $\coho[2]{G_i}$ is Cohen--Macaulay for all $i \geq 1$.
Thus we established \enref{main1}, \enref{main2} and \enref{main4}\@.
So \enref{main3} holds too, by
Lemma~\ref{lemma:main2implies3}\@.
\end{proof}

\subsection*{Report on filter-regular parameters}
The 2-rank of $\Co$ is four, so the Dickson elements are in degrees 8, 12,
14 and 15 for any subgroup in the tower. As $\coho{\Co}$ contains
these Dickson invariants~\cite{Benson:Co3}, so does each $\coho{G_i}$:
no higher powers are necessary.

$G_0$ is the Sylow subgroup, with rank one centre. Applying
the weak rank-restriction condition \cite[Lemma~2.3]{128gps} we constructed
filter-regular parameters $c_1,c_2,c_3,c_4$ in degrees 8, 4, 6 and 7\@.
Using \cite[Prop.~3.2]{128gps} we demonstrated the existence
of filter-regular parameters in degrees 8, 4, 2 and 2\@.
This allowed us to terminate the calculation in degree 14, where the
last relation is found.

Our $c_1,c_2,c_3$ are stable for $G_3$, and so
$c_1,c_2,c_3$ is a filter-regular sequence in $\coho{G_i}$ for $i=1,2,3$.
For $i=1,2$ we found a fourth parameter in degree~$1$, so the
calculations for $\coho{G_1}$ and $\coho{G_2}$ terminate when
the last relation is found in degree 16\@.
For $G_3$ we found a fourth parameter in degree~7, detecting
completion in degree~21\@. The presentation is complete after
degree 18\@.

For $G_4 = \Co$ we had to construct Benson's Dickson
invariants, detecting completion in degree~45\@. The presentation
is complete after degree~33\@.

\appendix
\section{A Minimal  Ring Presentation}
\label{app:Pres}
\noindent
Ring generators are denoted by a letter with two indices.
$\coho[2]{\Co}$ has no nilradical. The letter `$b$' denotes a
generator with nilpotent restriction to the centre $Z(S)$
of the Sylow subgroup. The letter `$c$' denotes a \emph{Duflot element},
whose restriction to $Z(S)$ is non-nilpotent.
The first index gives the degree of the generator,
the second is to distinguish generators of the same degree.
This presentation is also available online~\cite{SimonsWebsite}\@.

A minimal generating set for $H^\ast(\Co;\f[2])$ is given by 
$$b_{4,0}, b_{6,1}, b_{8,1}, c_{8,3}, b_{12,1}, b_{12,7}, b_{14,1}, b_{3,0}, b_{5,0}, b_{7,0}, b_{7,1}, b_{9,0}, b_{11,5}, b_{13,1}, b_{13,7}, b_{15,13}.$$

The following polynomials form a minimal generating set of the relation ideal:
\begin{enumerate}
\item $b_{5,0}^2+b_{3,0} b_{7,0}+b_{4,0} b_{6,1}$
\item $b_{3,0}^2 b_{5,0}+b_{8,1} b_{3,0}+b_{4,0} b_{7,1}$
\item $b_{3,0} b_{9,0}+b_{4,0} b_{8,1}+b_{4,0}^3$
\item $b_{5,0} b_{7,0}+b_{3,0}^4+b_{6,1} b_{3,0}^2+b_{6,1}^2+b_{4,0}^3$
\item $b_{5,0} b_{7,1}+b_{4,0} b_{8,1}+b_{4,0}^3$
\item $b_{6,1} b_{7,1}+b_{4,0} b_{9,0}+b_{4,0} b_{6,1} b_{3,0}+b_{4,0}^2 b_{5,0}$
\item $b_{3,0}^2 b_{7,0}+b_{8,1} b_{5,0}+b_{4,0} b_{9,0}+b_{4,0} b_{6,1} b_{3,0}$
\item $b_{3,0}^2 b_{7,1}+b_{4,0} b_{9,0}+b_{4,0} b_{3,0}^3+b_{4,0}^2 b_{5,0}$
\item $b_{6,1} b_{3,0} b_{5,0}+b_{6,1} b_{8,1}+b_{4,0} b_{3,0} b_{7,1}+b_{4,0}^2 b_{3,0}^2+b_{4,0}^2 b_{6,1}$
\item $b_{5,0} b_{9,0}+b_{4,0} b_{3,0} b_{7,1}+b_{4,0} b_{3,0} b_{7,0}+b_{4,0}^2 b_{3,0}^2+b_{4,0}^2 b_{6,1}$
\item $b_{7,0} b_{7,1}+b_{4,0} b_{3,0} b_{7,0}$
\item $b_{6,1} b_{9,0}+b_{4,0} b_{6,1} b_{5,0}+b_{4,0}^2 b_{7,1}+b_{4,0}^3 b_{3,0}$
\item $b_{12,7} b_{3,0}+b_{4,0} b_{11,5}+b_{4,0} c_{8,3} b_{3,0}$
\item $b_{3,0}^5+b_{8,1} b_{7,0}+b_{6,1} b_{3,0}^3+b_{6,1}^2 b_{3,0}+b_{4,0}^2 b_{7,0}+b_{4,0}^3 b_{3,0}$
\item $b_{3,0} b_{13,1}+b_{8,1} b_{3,0} b_{5,0}+b_{8,1}^2+b_{4,0}^2 b_{8,1}$
\item $b_{3,0} b_{13,7}+b_{4,0} b_{12,7}+c_{8,3} b_{3,0} b_{5,0}$
\item $b_{5,0} b_{11,5}+b_{4,0} b_{12,7}+c_{8,3} b_{3,0} b_{5,0}$
\item $b_{7,0} b_{9,0}+b_{4,0} b_{3,0}^4+b_{4,0} b_{6,1} b_{3,0}^2+b_{4,0} b_{6,1}^2+b_{4,0}^4$
\item $b_{7,1} b_{9,0}+b_{8,1} b_{3,0} b_{5,0}+b_{8,1}^2+b_{4,0}^2 b_{3,0} b_{5,0}+b_{4,0}^2 b_{8,1}$
\item $b_{6,1} b_{11,5}+b_{4,0} b_{13,7}+b_{6,1} c_{8,3} b_{3,0}+b_{4,0} c_{8,3} b_{5,0}$
\item $b_{8,1} b_{9,0}+b_{4,0} b_{13,1}+b_{4,0} b_{8,1} b_{5,0}$
\item $b_{12,7} b_{5,0}+b_{4,0} b_{13,7}+b_{4,0} c_{8,3} b_{5,0}$
\item $b_{3,0}^2 b_{11,5}+b_{4,0} b_{13,7}+c_{8,3} b_{3,0}^3+b_{4,0} c_{8,3} b_{5,0}$
\item $b_{3,0} b_{7,1}^2+b_{4,0} b_{13,1}+b_{4,0}^2 b_{3,0}^3$
\item $b_{6,1} b_{12,7}+b_{4,0} b_{3,0} b_{11,5}+b_{4,0} c_{8,3} b_{3,0}^2$
\item $b_{3,0} b_{15,13}+b_{6,1} b_{12,1}+b_{4,0} b_{7,1}^2+b_{4,0} b_{7,0}^2+b_{4,0} b_{14,1}+b_{4,0}^3 b_{3,0}^2+c_{8,3} b_{3,0} b_{7,0}$
\item $b_{5,0} b_{13,1}+b_{4,0} b_{7,1}^2+b_{4,0}^3 b_{3,0}^2$
\item $b_{5,0} b_{13,7}+b_{4,0} b_{3,0} b_{11,5}+c_{8,3} b_{3,0} b_{7,0}+b_{4,0} c_{8,3} b_{3,0}^2+b_{4,0} b_{6,1} c_{8,3}$
\item $b_{7,0} b_{11,5}+c_{8,3} b_{3,0} b_{7,0}$
\item $b_{9,0}^2+b_{4,0} b_{7,1}^2+b_{4,0}^2 b_{3,0} b_{7,0}+b_{4,0}^3 b_{3,0}^2+b_{4,0}^3 b_{6,1}$
\item $b_{6,1} b_{13,1}+b_{4,0} b_{8,1} b_{7,1}+b_{4,0}^2 b_{8,1} b_{3,0}$
\item $b_{6,1} b_{13,7}+b_{4,0}^2 b_{11,5}+b_{6,1} c_{8,3} b_{5,0}+b_{4,0}^2 c_{8,3} b_{3,0}$
\item $b_{12,1} b_{7,0}$
\item $b_{12,7} b_{7,0}$
\item $b_{12,7} b_{7,1}+b_{8,1} b_{11,5}+b_{4,0}^2 b_{11,5}+b_{8,1} c_{8,3} b_{3,0}+b_{4,0}^2 c_{8,3} b_{3,0}$
\item $b_{14,1} b_{5,0}+b_{8,1}^2 b_{3,0}+b_{6,1} b_{8,1} b_{5,0}+b_{6,1}^2 b_{7,0}+b_{4,0} b_{15,13}+b_{4,0} b_{12,1} b_{3,0}+b_{4,0} b_{6,1} b_{3,0}^3+b_{4,0} b_{6,1}^2 b_{3,0}+b_{4,0}^2 b_{6,1} b_{5,0}+b_{4,0}^3 b_{7,1}+b_{4,0}^3 b_{7,0}+b_{4,0} c_{8,3} b_{7,0}$
\item $b_{14,1} b_{3,0}^2+b_{12,1} b_{3,0} b_{5,0}+b_{8,1} b_{3,0}^4+b_{6,1} b_{7,0}^2+b_{6,1} b_{14,1}+b_{6,1} b_{8,1} b_{3,0}^2+b_{6,1}^2 b_{8,1}+b_{4,0} b_{6,1} b_{3,0} b_{7,0}+b_{4,0}^2 b_{3,0}^4+b_{4,0}^2 b_{12,1}+b_{4,0}^2 b_{6,1} b_{3,0}^2+b_{4,0}^2 b_{6,1}^2+b_{4,0}^3 b_{8,1}+b_{4,0}^5$
\item $b_{5,0} b_{15,13}+b_{12,1} b_{3,0} b_{5,0}+b_{6,1} b_{7,0}^2+b_{6,1} b_{14,1}+b_{4,0} b_{8,1} b_{3,0} b_{5,0}+b_{4,0} b_{8,1}^2+b_{4,0}^3 b_{8,1}+c_{8,3} b_{3,0}^4+b_{6,1} c_{8,3} b_{3,0}^2+b_{6,1}^2 c_{8,3}+b_{4,0}^3 c_{8,3}$
\item $b_{7,0} b_{13,1}$
\item $b_{7,0} b_{13,7}+c_{8,3} b_{3,0}^4+b_{6,1} c_{8,3} b_{3,0}^2+b_{6,1}^2 c_{8,3}+b_{4,0}^3 c_{8,3}$
\item $b_{7,1} b_{13,7}+b_{8,1} b_{12,7}+b_{4,0}^2 b_{12,7}+b_{4,0} b_{8,1} c_{8,3}+b_{4,0}^3 c_{8,3}$
\item $b_{9,0} b_{11,5}+b_{8,1} b_{12,7}+b_{4,0}^2 b_{12,7}+b_{4,0} b_{8,1} c_{8,3}+b_{4,0}^3 c_{8,3}$
\item $b_{12,1} b_{3,0}^3+b_{6,1} b_{15,13}+b_{6,1} b_{12,1} b_{3,0}+b_{4,0} b_{14,1} b_{3,0}+b_{4,0} b_{12,1} b_{5,0}+b_{4,0} b_{8,1} b_{3,0}^3+b_{4,0} b_{6,1} b_{8,1} b_{3,0}+b_{4,0} b_{6,1}^2 b_{5,0}+b_{4,0}^2 b_{13,1}+b_{4,0}^2 b_{6,1} b_{7,0}+b_{4,0}^3 b_{3,0}^3+b_{4,0}^3 b_{6,1} b_{3,0}+b_{4,0}^4 b_{5,0}+b_{6,1} c_{8,3} b_{7,0}$
\item $b_{12,7} b_{9,0}+b_{8,1} b_{13,7}+b_{4,0}^2 b_{13,7}+b_{8,1} c_{8,3} b_{5,0}+b_{4,0}^2 c_{8,3} b_{5,0}$
\item $b_{3,0} b_{7,1} b_{11,5}+b_{8,1} b_{13,7}+b_{4,0}^2 b_{13,7}+b_{8,1} c_{8,3} b_{5,0}+b_{4,0} c_{8,3} b_{9,0}+b_{4,0} c_{8,3} b_{3,0}^3$
\item $b_{7,0}^3+b_{14,1} b_{7,0}$
\item $b_{7,1}^3+b_{8,1} b_{13,1}+b_{4,0}^2 b_{13,1}+b_{4,0}^3 b_{9,0}+b_{4,0}^3 b_{3,0}^3+b_{4,0}^4 b_{5,0}$
\item $b_{7,0} b_{15,13}+c_{8,3} b_{7,0}^2$
\item $b_{7,1} b_{15,13}+b_{12,1} b_{3,0} b_{7,1}+b_{8,1} b_{7,1}^2+b_{8,1} b_{14,1}+b_{8,1}^2 b_{3,0}^2+b_{6,1} b_{8,1}^2+b_{6,1}^2 b_{3,0} b_{7,0}+b_{4,0} b_{6,1} b_{3,0}^4+b_{4,0} b_{6,1}^2 b_{3,0}^2+b_{4,0}^2 b_{7,1}^2+b_{4,0}^2 b_{14,1}+b_{4,0}^2 b_{8,1} b_{3,0}^2+b_{4,0}^3 b_{3,0} b_{7,0}+b_{4,0}^4 b_{6,1}+b_{4,0} c_{8,3} b_{3,0} b_{7,0}$
\item $b_{9,0} b_{13,1}+b_{8,1} b_{7,1}^2+b_{4,0}^2 b_{7,1}^2+b_{4,0}^2 b_{8,1} b_{3,0}^2+b_{4,0}^4 b_{3,0}^2$
\item $b_{9,0} b_{13,7}+b_{4,0} b_{7,1} b_{11,5}+b_{4,0} c_{8,3} b_{3,0} b_{7,0}+b_{4,0}^2 c_{8,3} b_{3,0}^2+b_{4,0}^2 b_{6,1} c_{8,3}$
\item $b_{11,5}^2+b_{12,1} b_{3,0} b_{7,1}+b_{8,1} b_{7,1}^2+b_{8,1} b_{14,1}+b_{8,1}^2 b_{3,0}^2+b_{6,1} b_{8,1}^2+b_{6,1}^2 b_{3,0} b_{7,0}+b_{4,0} b_{6,1} b_{3,0}^4+b_{4,0} b_{6,1} b_{12,1}+b_{4,0} b_{6,1}^2 b_{3,0}^2+b_{4,0}^2 b_{7,0}^2+b_{4,0}^2 b_{8,1} b_{3,0}^2+b_{4,0}^3 b_{3,0} b_{7,0}+b_{4,0}^4 b_{3,0}^2+b_{4,0}^4 b_{6,1}+c_{8,3} b_{7,1}^2+b_{4,0}^2 c_{8,3} b_{3,0}^2+c_{8,3}^2 b_{3,0}^2$
\item $b_{12,7} b_{11,5}+b_{8,1} b_{15,13}+b_{6,1} b_{12,1} b_{5,0}+b_{4,0}^2 b_{12,1} b_{3,0}+b_{8,1} c_{8,3} b_{7,1}+b_{8,1} c_{8,3} b_{7,0}+b_{4,0} c_{8,3} b_{11,5}+b_{4,0} b_{8,1} c_{8,3} b_{3,0}+b_{4,0} c_{8,3}^2 b_{3,0}$
\item $b_{14,1} b_{9,0}+b_{8,1} b_{15,13}+b_{8,1}^2 b_{7,1}+b_{6,1} b_{12,1} b_{5,0}+b_{4,0} b_{12,1} b_{7,1}+b_{4,0} b_{6,1} b_{8,1} b_{5,0}+b_{4,0} b_{6,1}^2 b_{7,0}+b_{4,0}^2 b_{15,13}+b_{4,0}^2 b_{12,1} b_{3,0}+b_{4,0}^2 b_{6,1} b_{3,0}^3+b_{4,0}^2 b_{6,1}^2 b_{3,0}+b_{4,0}^3 b_{6,1} b_{5,0}+b_{4,0}^4 b_{7,1}+b_{4,0}^4 b_{7,0}+b_{8,1} c_{8,3} b_{7,0}+b_{4,0}^2 c_{8,3} b_{7,0}$
\item $b_{14,1} b_{3,0} b_{7,1}+b_{12,7}^2+b_{4,0} b_{7,1} b_{13,1}+b_{4,0} b_{8,1} b_{3,0}^4+b_{4,0} b_{8,1} b_{12,1}+b_{4,0} b_{6,1} b_{7,0}^2+{}$
$b_{4,0} b_{6,1} b_{14,1}+b_{4,0} b_{6,1} b_{8,1} b_{3,0}^2+b_{4,0} b_{6,1}^2 b_{8,1}+b_{4,0}^2 b_{8,1} b_{3,0} b_{5,0}+b_{4,0}^2 b_{8,1}^2+{}$ \\
$b_{4,0}^2 b_{6,1} b_{3,0} b_{7,0}+b_{4,0}^3 b_{3,0}^4+b_{4,0}^3 b_{6,1} b_{3,0}^2+b_{4,0}^3 b_{6,1}^2+b_{4,0}^6+b_{8,1} c_{8,3} b_{3,0} b_{5,0}+b_{8,1}^2 c_{8,3}+b_{4,0}^2 b_{8,1} c_{8,3}$
\item $b_{9,0} b_{15,13}+b_{12,7}^2+b_{4,0} b_{12,1} b_{3,0} b_{5,0}+b_{4,0} b_{6,1} b_{7,0}^2+b_{4,0} b_{6,1} b_{14,1}+b_{4,0}^2 b_{8,1} b_{3,0} b_{5,0}+b_{4,0}^2 b_{8,1}^2+b_{4,0}^4 b_{8,1}+b_{8,1} c_{8,3} b_{3,0} b_{5,0}+b_{8,1}^2 c_{8,3}+b_{4,0} c_{8,3} b_{3,0}^4+b_{4,0} b_{6,1} c_{8,3} b_{3,0}^2+b_{4,0} b_{6,1}^2 c_{8,3}+b_{4,0}^2 b_{8,1} c_{8,3}+b_{4,0}^4 c_{8,3}$
\item $b_{11,5} b_{13,7}+b_{12,7}^2+c_{8,3}^2 b_{3,0} b_{5,0}$
\item $b_{12,7} b_{13,7}+b_{4,0} b_{14,1} b_{7,1}+b_{4,0} b_{12,1} b_{9,0}+b_{4,0} b_{8,1} b_{13,1}+b_{4,0}^2 b_{14,1} b_{3,0}+b_{4,0}^2 b_{12,1} b_{5,0}+b_{4,0} c_{8,3} b_{13,7}+b_{4,0} c_{8,3} b_{13,1}+b_{4,0} c_{8,3}^2 b_{5,0}$
\item $b_{7,1}^2 b_{11,5}+b_{12,7} b_{13,1}+b_{4,0}^3 b_{13,7}+b_{4,0} c_{8,3} b_{13,1}+b_{4,0}^2 c_{8,3} b_{3,0}^3+b_{4,0}^3 c_{8,3} b_{5,0}$
\item $b_{11,5} b_{15,13}+b_{12,7} b_{14,1}+b_{12,1} b_{3,0} b_{11,5}+b_{8,1} b_{7,1} b_{11,5}+b_{4,0}^2 b_{7,1} b_{11,5}+b_{4,0}^3 b_{3,0} b_{11,5}+c_{8,3} b_{12,1} b_{3,0}^2+b_{6,1} c_{8,3} b_{12,1}+b_{4,0} c_{8,3} b_{7,0}^2+b_{4,0} c_{8,3} b_{14,1}+b_{4,0} b_{8,1} c_{8,3} b_{3,0}^2+{}$ \\
$b_{4,0}^2 c_{8,3} b_{3,0} b_{7,1}+b_{4,0}^3 c_{8,3} b_{3,0}^2+c_{8,3}^2 b_{3,0} b_{7,0}$
\item $b_{13,1}^2+b_{12,7} b_{14,1}+b_{12,1} b_{7,1}^2+b_{12,1} b_{3,0} b_{11,5}+b_{8,1} b_{7,1} b_{11,5}+b_{4,0}^2 b_{7,1} b_{11,5}+{}$ \\
$b_{4,0}^2 b_{12,1} b_{3,0}^2+b_{4,0}^3 b_{3,0} b_{11,5}+c_{8,3} b_{12,1} b_{3,0}^2+b_{4,0} c_{8,3} b_{7,1}^2+b_{4,0} b_{8,1} c_{8,3} b_{3,0}^2+{}$ \\
$b_{4,0}^2 c_{8,3} b_{3,0} b_{7,1}$
\item $b_{13,1} b_{13,7}+b_{8,1} b_{7,1} b_{11,5}+b_{4,0}^2 b_{7,1} b_{11,5}+b_{4,0}^3 b_{3,0} b_{11,5}+b_{4,0} b_{8,1} c_{8,3} b_{3,0}^2+{}$ \\
$b_{4,0}^2 c_{8,3} b_{3,0} b_{7,1}+b_{4,0}^3 c_{8,3} b_{3,0}^2$
\item $b_{13,7}^2+b_{4,0} b_{12,1} b_{3,0} b_{7,1}+b_{4,0} b_{8,1} b_{7,1}^2+b_{4,0} b_{8,1} b_{14,1}+b_{4,0} b_{8,1}^2 b_{3,0}^2+b_{4,0} b_{6,1} b_{8,1}^2+b_{4,0} b_{6,1}^2 b_{3,0} b_{7,0}+b_{4,0}^2 b_{6,1} b_{3,0}^4+b_{4,0}^2 b_{6,1} b_{12,1}+b_{4,0}^2 b_{6,1}^2 b_{3,0}^2+b_{4,0}^3 b_{7,0}^2+b_{4,0}^3 b_{8,1} b_{3,0}^2+b_{4,0}^4 b_{3,0} b_{7,0}+b_{4,0}^5 b_{3,0}^2+b_{4,0}^5 b_{6,1}+b_{4,0} c_{8,3} b_{7,1}^2+b_{4,0}^3 c_{8,3} b_{3,0}^2+c_{8,3}^2 b_{3,0} b_{7,0}+b_{4,0} b_{6,1} c_{8,3}^2$
\item $b_{14,1} b_{13,7}+b_{12,7} b_{15,13}+b_{8,1}^2 b_{11,5}+b_{4,0} b_{12,1} b_{11,5}+b_{6,1} b_{8,1} c_{8,3} b_{5,0}+b_{6,1}^2 c_{8,3} b_{7,0}+b_{4,0} c_{8,3} b_{15,13}+b_{4,0} b_{6,1} c_{8,3} b_{3,0}^3+b_{4,0} b_{6,1}^2 c_{8,3} b_{3,0}+b_{4,0}^2 b_{6,1} c_{8,3} b_{5,0}+b_{4,0}^3 c_{8,3} b_{7,1}+b_{4,0}^3 c_{8,3} b_{7,0}+b_{4,0} c_{8,3}^2 b_{7,0}$
\item $b_{7,1}^2 b_{13,1}+b_{12,7} b_{15,13}+b_{8,1} b_{12,1} b_{7,1}+b_{4,0} b_{6,1} b_{14,1} b_{3,0}+b_{4,0} b_{6,1} b_{12,1} b_{5,0}+{}$ \\
$b_{4,0} b_{6,1} b_{8,1} b_{3,0}^3+b_{4,0} b_{6,1}^2 b_{8,1} b_{3,0}+b_{4,0} b_{6,1}^3 b_{5,0}+b_{4,0}^2 b_{12,1} b_{7,1}+b_{4,0}^2 b_{6,1}^2 b_{7,0}+{}$ \\
$b_{4,0}^3 b_{15,13}+b_{4,0}^3 b_{12,1} b_{3,0}+b_{4,0}^3 b_{6,1} b_{3,0}^3+b_{4,0}^3 b_{6,1}^2 b_{3,0}+b_{4,0}^4 b_{6,1} b_{5,0}+b_{4,0}^3 c_{8,3} b_{7,0}$
\item $b_{14,1} b_{3,0} b_{11,5}+b_{8,1} b_{7,1} b_{13,1}+b_{8,1}^2 b_{12,1}+b_{4,0} b_{11,5} b_{13,1}+b_{4,0} b_{12,1} b_{12,7}+{}$ \\
$b_{4,0} b_{6,1} b_{12,1} b_{3,0}^2+b_{4,0}^2 b_{7,1} b_{13,1}+b_{4,0}^3 b_{8,1} b_{3,0} b_{5,0}+b_{4,0}^3 b_{8,1}^2+b_{4,0}^4 b_{12,1}+{}$ \\
$b_{4,0}^5 b_{8,1}+c_{8,3} b_{12,1} b_{3,0} b_{5,0}+b_{8,1} c_{8,3} b_{3,0}^4+b_{6,1} c_{8,3} b_{7,0}^2+b_{6,1} c_{8,3} b_{14,1}+b_{6,1} b_{8,1} c_{8,3} b_{3,0}^2+b_{6,1}^2 b_{8,1} c_{8,3}+b_{4,0} b_{8,1} c_{8,3} b_{3,0} b_{5,0}+b_{4,0} b_{8,1}^2 c_{8,3}+b_{4,0} b_{6,1} c_{8,3} b_{3,0} b_{7,0}+b_{4,0}^2 c_{8,3} b_{3,0}^4+b_{4,0}^2 c_{8,3} b_{12,1}+b_{4,0}^2 b_{6,1} c_{8,3} b_{3,0}^2+b_{4,0}^2 b_{6,1}^2 c_{8,3}+b_{4,0}^5 c_{8,3}$
\item $b_{13,1} b_{15,13}+b_{14,1} b_{7,1}^2+b_{8,1} b_{7,1} b_{13,1}+b_{8,1}^2 b_{12,1}+b_{4,0} b_{6,1} b_{12,1} b_{3,0}^2+b_{4,0}^2 b_{7,1} b_{13,1}+b_{4,0}^2 b_{12,1} b_{3,0} b_{5,0}+b_{4,0}^2 b_{8,1} b_{3,0}^4+b_{4,0}^2 b_{6,1} b_{7,0}^2+b_{4,0}^2 b_{6,1} b_{14,1}+b_{4,0}^2 b_{6,1} b_{8,1} b_{3,0}^2+{}$ \\
$b_{4,0}^2 b_{6,1}^2 b_{8,1}+b_{4,0}^3 b_{8,1} b_{3,0} b_{5,0}+b_{4,0}^3 b_{8,1}^2+b_{4,0}^3 b_{6,1} b_{3,0} b_{7,0}+b_{4,0}^4 b_{3,0}^4+b_{4,0}^4 b_{6,1} b_{3,0}^2+b_{4,0}^4 b_{6,1}^2+b_{4,0}^7$
\item $b_{13,7} b_{15,13}+b_{8,1} b_{7,1} b_{13,1}+b_{8,1}^2 b_{12,1}+b_{4,0} b_{6,1} b_{12,1} b_{3,0}^2+b_{4,0}^2 b_{7,1} b_{13,1}+{}$ \\
$b_{4,0}^3 b_{8,1} b_{3,0} b_{5,0}+b_{4,0}^3 b_{8,1}^2+b_{4,0}^4 b_{12,1}+b_{4,0}^5 b_{8,1}+c_{8,3} b_{12,1} b_{3,0} b_{5,0}+b_{6,1} c_{8,3} b_{7,0}^2+b_{6,1} c_{8,3} b_{14,1}+b_{4,0} b_{8,1} c_{8,3} b_{3,0} b_{5,0}+b_{4,0} b_{8,1}^2 c_{8,3}+b_{4,0}^3 b_{8,1} c_{8,3}+c_{8,3}^2 b_{3,0}^4+b_{6,1} c_{8,3}^2 b_{3,0}^2+b_{6,1}^2 c_{8,3}^2+b_{4,0}^3 c_{8,3}^2$
\item $b_{7,1} b_{11,5} b_{13,1}+b_{14,1}^2 b_{3,0}+b_{8,1} b_{12,1} b_{11,5}+b_{8,1}^3 b_{7,0}+b_{6,1} b_{8,1}^2 b_{3,0}^3+b_{6,1}^4 b_{7,0}+{}$ \\
$b_{4,0} b_{12,7} b_{15,13}+b_{4,0} b_{12,1}^2 b_{3,0}+b_{4,0} b_{8,1} b_{12,1} b_{7,1}+b_{4,0} b_{8,1}^2 b_{11,5}+b_{4,0} b_{6,1} b_{14,1} b_{7,0}+b_{4,0} b_{6,1} b_{8,1}^2 b_{5,0}+b_{4,0} b_{6,1}^2 b_{8,1} b_{7,0}+b_{4,0} b_{6,1}^3 b_{3,0}^3+b_{4,0} b_{6,1}^4 b_{3,0}+b_{4,0}^2 b_{8,1}^2 b_{7,0}+{}$ \\
$b_{4,0}^2 b_{6,1} b_{14,1} b_{3,0}+b_{4,0}^2 b_{6,1} b_{12,1} b_{5,0}+b_{4,0}^2 b_{6,1} b_{8,1} b_{3,0}^3+b_{4,0}^2 b_{6,1}^3 b_{5,0}+b_{4,0}^3 b_{12,1} b_{7,1}+b_{4,0}^3 b_{8,1}^2 b_{3,0}+b_{4,0}^4 b_{15,13}+b_{4,0}^4 b_{12,1} b_{3,0}+b_{4,0}^4 b_{8,1} b_{7,0}+b_{4,0}^4 b_{6,1} b_{3,0}^3+b_{4,0}^4 b_{6,1}^2 b_{3,0}+b_{4,0}^6 b_{7,1}+b_{8,1} c_{8,3} b_{15,13}+b_{8,1}^2 c_{8,3} b_{7,1}+b_{6,1} c_{8,3} b_{14,1} b_{3,0}+b_{6,1} b_{8,1} c_{8,3} b_{3,0}^3+{}$ \\
$b_{6,1}^2 b_{8,1} c_{8,3} b_{3,0}+b_{6,1}^3 c_{8,3} b_{5,0}+b_{4,0} c_{8,3} b_{12,1} b_{7,1}+b_{4,0} b_{6,1}^2 c_{8,3} b_{7,0}+b_{4,0}^2 c_{8,3} b_{15,13}+b_{4,0}^2 b_{6,1} c_{8,3} b_{3,0}^3+b_{4,0}^2 b_{6,1}^2 c_{8,3} b_{3,0}+b_{4,0}^3 b_{6,1} c_{8,3} b_{5,0}+b_{4,0}^4 c_{8,3} b_{7,0}+b_{8,1} c_{8,3}^2 b_{7,0}+b_{4,0}^2 c_{8,3}^2 b_{7,0}$
\item $b_{8,1} b_{11,5} b_{13,1}+b_{8,1} b_{12,1} b_{12,7}+b_{4,0} b_{14,1} b_{7,0}^2+b_{4,0} b_{14,1}^2+b_{4,0} b_{8,1} b_{7,1} b_{13,1}+b_{4,0}^2 b_{12,1}^2+b_{4,0}^3 b_{7,1} b_{13,1}+b_{4,0}^4 b_{8,1} b_{3,0} b_{5,0}+b_{4,0}^4 b_{8,1}^2+b_{4,0}^6 b_{8,1}+c_{8,3} b_{12,7}^2+b_{4,0} c_{8,3} b_{7,1} b_{13,1}+b_{4,0}^2 b_{8,1} c_{8,3} b_{3,0} b_{5,0}+b_{4,0}^2 b_{8,1}^2 c_{8,3}+b_{4,0}^4 b_{8,1} c_{8,3}+b_{8,1} c_{8,3}^2 b_{3,0} b_{5,0}+b_{8,1}^2 c_{8,3}^2+b_{4,0}^2 b_{8,1} c_{8,3}^2$
\item $b_{8,1} b_{12,7} b_{13,1}+b_{8,1} b_{12,1} b_{13,7}+b_{4,0} b_{14,1} b_{15,13}+b_{4,0} b_{12,1} b_{14,1} b_{3,0}+b_{4,0} b_{12,1}^2 b_{5,0}+b_{4,0} b_{8,1} b_{14,1} b_{7,1}+b_{4,0}^2 b_{14,1} b_{11,5}+b_{4,0}^2 b_{12,7} b_{13,1}+b_{4,0}^2 b_{12,1} b_{13,7}+b_{4,0}^2 b_{12,1} b_{13,1}+b_{4,0}^2 b_{8,1}^2 b_{3,0}^3+b_{4,0}^2 b_{6,1} b_{8,1}^2 b_{3,0}+b_{4,0}^2 b_{6,1}^2 b_{8,1} b_{5,0}+b_{4,0}^3 b_{14,1} b_{7,1}+b_{4,0}^3 b_{6,1} b_{15,13}+b_{4,0}^3 b_{6,1} b_{8,1} b_{7,0}+b_{4,0}^4 b_{12,1} b_{5,0}+b_{4,0}^4 b_{6,1}^2 b_{5,0}+b_{4,0}^5 b_{13,1}+b_{4,0}^5 b_{8,1} b_{5,0}+b_{4,0}^5 b_{6,1} b_{7,0}+b_{4,0}^6 b_{3,0}^3+b_{4,0}^6 b_{6,1} b_{3,0}+b_{4,0}^7 b_{5,0}+b_{4,0} c_{8,3} b_{14,1} b_{7,1}+b_{4,0} c_{8,3} b_{14,1} b_{7,0}+{}$ \\
$b_{4,0} b_{8,1} c_{8,3} b_{13,1}+b_{4,0} b_{6,1} c_{8,3} b_{12,1} b_{3,0}+b_{4,0}^3 b_{6,1} c_{8,3} b_{7,0}$
\end{enumerate}


\end{document}